\newcommand{\Ker}{\operatorname{Ker}}
\newcommand{\mrk}{\operatorname{mrk}}
\newcommand{\rk}{\operatorname{rk}}
\renewcommand{\setminus}{\smallsetminus}
\def\K{\mathbb{K}}
\def\calL{\mathcal{L}}
\def\calN{\mathcal{N}}
\def\calR{\mathcal{R}}
\def\calS{\mathcal{S}}
\def\calT{\mathcal{T}}
\theoremstyle{definition}
\theoremstyle{plain}
\newtheorem{theo}{Theorem}[section]
\newtheorem{prop}[theo]{Proposition}
\newtheorem{claim}{Claim}
\theoremstyle{plain}
\theoremstyle{remark}
\newtheorem{Rems}{Remarks}[section]
\newtheorem{Rem}[Rems]{Remark}
\title{On the minimal rank in non-reflexive operator spaces over finite fields}
\author{Cl\'ement de Seguins Pazzis\footnote{Universit\'e de Versailles Saint-Quentin-en-Yvelines, Laboratoire de Math\'ematiques
de Versailles, 45 avenue des Etats-Unis, 78035 Versailles cedex, France}
\footnote{e-mail address: dsp.prof@gmail.com}}
\begin{document}

\thispagestyle{plain}

\maketitle

\begin{abstract}
Let $U$ and $V$ be vector spaces over a field $\K$, and
$\calS$ be an $n$-dimensional linear subspace of $\calL(U,V)$.
The space $\calS$ is called algebraically reflexive whenever it contains every linear map $g : U \rightarrow V$
such that, for all $x \in U$, there exists $f \in \calS$ with $g(x)=f(x)$.
A theorem of Meshulam and \v Semrl states that if $\calS$ is not algebraically reflexive
then it contains a non-zero operator $f$ of rank at most $2n-2$, provided that $\K$ has more than $n+2$ elements.
In this article, we prove that the provision on the cardinality of the underlying field is unnecessary.
To do so, we demonstrate that the above result holds for all finite fields.
\end{abstract}

\vskip 2mm
\noindent
\emph{AMS Classification:} 15A03, 47L05.

\vskip 2mm
\noindent
\emph{Keywords:} Algebraic reflexivity; Rank; Finite fields.

\section{Introduction}

Let $\K$ be an arbitrary field and $U$ and $V$ be vector spaces over $\K$.
Given a linear subspace $\calS$ of the space $\calL(U,V)$ of all linear maps from $U$ to $V$, its reflexive closure is defined as
$$\calR(\calS):=\bigl\{g \in \calL(U,V) : \; \forall x \in U, \; \exists f \in \calS : \; g(x)=f(x)\bigr\};$$
it is obviously a linear subspace of $\calL(U,V)$ that contains $\calS$, and one checks that $\calR(\calS)=\calR(\calR(\calS))$.
One says that $\calS$ is \textbf{(algebraically) reflexive} whenever $\calR(\calS)=\calS$.

An active research topic consists in finding sufficient conditions for the reflexivity of an operator space
in terms of the dimension of $\calS$ and the rank of its elements.
Denote by
$$\mrk(\calS):=\min\bigl\{\rk(f) \mid f \in \calS \setminus \{0\}\bigr\}$$
the minimal rank among the non-zero operators in $\calS$ (here we do not distinguish between infinite cardinals
and simply write $\rk(f)=+\infty$ if $f$ is not a finite rank operator).

Assume now that $\calS$ is non-reflexive and finite-dimensional.
In \cite{Larson},
Larson showed that
$$\mrk(\calS) <+\infty.$$
Hence, a finite-dimensional operator space that contains no non-zero operator of finite rank is always reflexive.
A natural improvement is to give an upper-bound for $\mrk(\calS)$ with respect to the dimension of $\calS$.
In \cite{Ding}, Ding showed that
$$\mrk(\calS) \leq (\dim \calS)^2.$$
Later, this upper-bound was substantially improved by
Meshulam and \v Semrl: in \cite{MeshulamSemrlLAA}, they showed that
$$\# \K>\dim \calS+2 \; \Rightarrow \; \mrk(\calS) \leq 2\dim \calS -2.$$
Earlier, this result had been obtained by Li and Pan for the field of complex numbers \cite{LiPan}.

For $2$-dimensional spaces, this upper bound is known to be optimal (see \cite{MeshulamSemrlLAA}).
For algebraically closed fields, Meshulam and \v Semrl further improved the upper-bound as follows in \cite{MeshulamSemrlPAMS}:
$$\mrk(\calS) \leq \dim \calS.$$
In \cite{dSPLLD1}, we examined whether the upper-bound $2\dim \calS-2$ from Meshulam and \v Semrl's result
was optimal or if one could improve it in the case when $\dim \calS \geq 3$.
First, it was proved that this upper-bound still held under the milder cardinality assumption
$\# \K>\dim \calS$, and then, under that provision, a classification of the non-reflexive $n$-dimensional operator spaces
$\calS$ such that $\mrk \calS=2n-2$ was achieved (see Theorem 6.1 of \cite{dSPLLD1}):
it was shown in particular that the existence of such spaces is connected to the existence of exotic
division algebra structures over the field $\K$, called left-division-bilinearizable (LDB) division algebras.
The existence of LDB division algebras over $\K$ is deeply connected to the quadratic structure of $\K$.
LDB division algebras were entirely classified in \cite{dSPLDB}, and as a consequence the following result was obtained\footnote{When $U$ and $V$ are finite-dimensional, Theorem \ref{betterbound}
is a straightforward consequence of Theorem 6.1 of \cite{dSPLLD1} and of Corollary 1.3 of \cite{dSPLDB}.
To obtain the general case, it suffices to extend the former to all vector spaces $U$ and $V$,
which can be done by noticing that Meshulam and \v Semrl's Corollary 2.5 of \cite{MeshulamSemrlLAA}
states that if $\mrk(\calS)=2\dim \calS-2$ then all the operators in $\calS$ have finite-rank
(provided that $\# \K>n+2$, but it has been shown in \cite{dSPLLD1} that it suffices to assume that $\# \K>n$)
and one can then simply apply the above results to the reduced
space associated with $\calS$, whose source and target spaces are finite-dimensional
(see Section \ref{infinitesection} for the definition of that reduced space).}:

\begin{theo}\label{betterbound}
Let $\calS$ be a non-reflexive $n$-dimensional subspace of $\calL(U,V)$, with
$\# \K>n \geq 3$.
If $\K$ has characteristic not $2$ and $n \not\in \{3,5,9\}$, then
$$\mrk(\calS) \leq 2n-3.$$
If $\K$ has characteristic $2$ and $n-1$ is not a power of $2$, then
$$\mrk(\calS) \leq 2n-3.$$
\end{theo}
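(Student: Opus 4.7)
The plan is to prove the theorem by contradiction, reducing first to the finite-dimensional setting and then invoking the two classification results already available, namely Theorem 6.1 of \cite{dSPLLD1} and Corollary 1.3 of \cite{dSPLDB}.

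Suppose then that $\mrk(\calS) \geq 2n-2$. Under the hypothesis $\#\K>n$, the strengthened form of Meshulam and \v Semrl's bound established in \cite{dSPLLD1} yields $\mrk(\calS) \leq 2n-2$, so in fact $\mrk(\calS) = 2n-2$. At this point I would invoke the extension (also proved in \cite{dSPLLD1}) of Corollary 2.5 of \cite{MeshulamSemrlLAA}: under the assumptions $\#\K>n$ and $\mrk(\calS)=2n-2$, every operator in $\calS$ has finite rank.

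The next step is the reduction to finite-dimensional source and target. Fixing a basis $f_1,\dots,f_n$ of $\calS$, the subspace $V_0:=\im f_1+\cdots+\im f_n$ of $V$ is finite-dimensional and contains $\im f$ for every $f\in \calS$, while $U_0:=\Ker f_1\cap\cdots\cap \Ker f_n$ has finite codimension in $U$ and is contained in $\Ker f$ for every $f\in \calS$. The resulting induced space $\calS'\subseteq \calL(U/U_0,V_0)$ has dimension $n$ and the same minimal rank as $\calS$. For non-reflexivity, pick any $g\in \calR(\calS)\setminus\calS$; for each $x\in U_0$ pointwise agreement with $\calS$ forces $g(x)=0$, and likewise $g(x)\in V_0$ for all $x\in U$, so $g$ descends to a map $\bar g \colon U/U_0 \to V_0$ which witnesses the non-reflexivity of $\calS'$. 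With finite-dimensional source and target in hand, Theorem 6.1 of \cite{dSPLLD1} then produces an LDB division algebra structure of dimension $n-1$ over $\K$.

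Finally, Corollary 1.3 of \cite{dSPLDB} classifies LDB division algebras: in characteristic not $2$ they occur only in dimensions $1$, $2$, $4$ and $8$ (hence forcing $n-1\in\{2,4,8\}$, i.e.\ $n\in\{3,5,9\}$, once $n\geq 3$), while in characteristic $2$ they occur only when the dimension is a power of $2$ (forcing $n-1$ to be a power of $2$). Under the stated hypotheses on $n$ and the characteristic of $\K$, no such algebra can exist, yielding the required contradiction. The principal technical obstacle is the reduction step above — checking that dimension, minimal rank and non-reflexivity all transfer from $\calS$ to $\calS'$ — and once that is in place, the remainder consists of direct appeals to the two cited classification results.
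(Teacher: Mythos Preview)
Your proposal is correct and follows essentially the same approach as the paper's own argument (sketched in the footnote to Theorem~\ref{betterbound}): assume $\mrk(\calS)=2n-2$, invoke the extension of Corollary~2.5 of \cite{MeshulamSemrlLAA} under the weaker hypothesis $\#\K>n$ to force all operators in $\calS$ to have finite rank, pass to the reduced space with finite-dimensional source and target, and then combine Theorem~6.1 of \cite{dSPLLD1} with Corollary~1.3 of \cite{dSPLDB} to obtain the contradiction. Your write-up is simply more explicit about the reduction step and about what the two cited results actually say.
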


For finite fields, this can even be improved as follows:

\begin{theo}\label{finitecase}
Let $\calS$ be a non-reflexive $n$-dimensional subspace of $\calL(U,V)$, with $\K$ a finite field such that
$\# \K>n \geq 3$.
Then,
$$\mrk(\calS) \leq 2n-3.$$
\end{theo}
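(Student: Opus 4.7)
The plan is to argue by contradiction: assume $\mrk(\calS) = 2n-2$ (the Meshulam--\v Semrl bound) and derive an obstruction from the finiteness of $\K$. By Theorem \ref{betterbound}, we already have $\mrk(\calS) \leq 2n-3$ except in two families of exceptional cases: (i) $\K$ has odd characteristic and $n \in \{3,5,9\}$; (ii) $\K$ has characteristic $2$ and $n-1$ is a power of $2$. It thus suffices to handle these exceptional families over a finite field $\K$ with $\#\K > n$.

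For each such case, I would invoke the classification in Theorem 6.1 of \cite{dSPLLD1}: under the hypothesis $\#\K > n$, any non-reflexive $n$-dimensional subspace $\calS \subseteq \calL(U,V)$ achieving $\mrk(\calS) = 2n-2$ must arise from an LDB division algebra structure of dimension $n-1$ over $\K$. The theorem then reduces to showing that no such LDB division algebra exists over $\K$ under the constraint $\#\K > n$.

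This non-existence would be extracted from the classification of LDB division algebras in \cite{dSPLDB}, combined with the arithmetic of finite fields. Over $\F_q$ of odd characteristic, the Chevalley--Warning theorem forces every nondegenerate quadratic form in three or more variables to be isotropic, so all quaternion and octonion algebras are split; together with the classification this rules out the LDB division algebras in dimensions $n-1 \in \{4,8\}$, thereby handling $n \in \{5,9\}$. The case $n=3$ requires a direct analysis of two-dimensional algebras under the constraint $q>3$. In characteristic $2$, the perfection of $\F_{2^m}$ (the map $x \mapsto x^2$ is bijective) obstructs the existence of the LDB algebras of dimension $2^k$ appearing in case (ii), again under the cardinality hypothesis.

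The main obstacle will be a clean execution of the cases $n=3$ in odd characteristic and $n-1=2^k$ in characteristic $2$: here the LDB algebras involved are of small dimension and the classification of \cite{dSPLDB} must be applied carefully to ensure the hypothesis $\#\K > n$ is used correctly and rules out every admissible multiplication over $\F_q$. Once each exceptional case has been excluded in this fashion, the contradiction with the assumption $\mrk(\calS) = 2n-2$ completes the proof.
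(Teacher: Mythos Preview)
Your overall strategy---reduce to the extremal case $\mrk(\calS)=2n-2$ and then appeal to the classification in Theorem~6.1 of \cite{dSPLLD1} together with arithmetic obstructions over finite fields---is exactly the paper's approach. However, the paper's execution is considerably more direct than yours. There is no need to pass through Theorem~\ref{betterbound} first and then treat the residual families $(n\in\{3,5,9\}$ in odd characteristic, $n-1$ a power of $2$ in characteristic~$2)$ one by one: Theorem~6.1 of \cite{dSPLLD1} already ties the existence of such an $\calS$ to an anisotropic quadratic form of dimension greater than~$2$, and over a finite field every such form is isotropic. That single fact dispatches all cases uniformly.

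By contrast, your route leaves real work undone. The detour through Theorem~\ref{betterbound} is circular (that theorem is itself deduced from Theorem~6.1 of \cite{dSPLLD1} and \cite{dSPLDB}), and your proposed case analysis is only sketched: for $n=3$ you defer to an unspecified ``direct analysis of two-dimensional algebras,'' and in characteristic~$2$ you invoke perfection of $\F_{2^m}$ without explaining which structural feature of the relevant LDB algebras it obstructs. These are precisely the places where the argument could go wrong, and you flag them yourself as obstacles. The paper's uniform quadratic-form argument avoids all of this: rather than eliminate the exceptional LDB algebras case by case, it exploits the fact that the classification already encodes the obstruction as an anisotropy condition, which fails automatically over finite fields in the required dimensions.
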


This follows from Theorem 6.1 of \cite{dSPLLD1} and from the fact, over a finite field, a quadratic form
whose dimension is greater than $2$ is always isotropic.

In this article, we consider the situation of small finite fields.
Until now, the best known result over such fields was the following one:

\begin{prop}[See Theorem 4.5 in \cite{dSPLLD1}]
Let $\calS$ be an $n$-dimensional non-reflexive operator space.
Then,
$$\mrk(\calS) \leq \dfrac{n(n+1)}{2}\cdot$$
\end{prop}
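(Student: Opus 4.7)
My plan is to argue by strong induction on $n$. The base case $n = 1$ is vacuous: every $1$-dimensional operator space $\K f$ is reflexive. Indeed, if $g(x) \in \K f(x)$ for all $x$, then (with $f \neq 0$) $g$ factors through the induced isomorphism $U/\ker f \xrightarrow{\sim} \im f$, and a straightforward computation on two non-proportional image vectors forces the eigenvalues of the resulting endomorphism of $\im f$ to coincide, so $g = \lambda f$ for a single scalar $\lambda$.

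For the inductive step, let $\calS \subset \calL(U,V)$ be $n$-dimensional and non-reflexive. If $\calS$ admits a non-reflexive proper subspace $\calS'$, the inductive hypothesis immediately yields $\mrk(\calS) \leq \mrk(\calS') \leq \frac{(n-1)n}{2} < \frac{n(n+1)}{2}$. So I may assume $\calS$ is \emph{minimally non-reflexive}: every proper subspace of $\calS$ is reflexive. Fix $g \in \calR(\calS) \setminus \calS$ and a basis $f_1,\dots,f_n$ of $\calS$; for $\alpha \in \K^n$, set $h_\alpha := g - \sum_i \alpha_i f_i$. Since $h_\alpha - h_\beta = \sum_i (\beta_i - \alpha_i) f_i \in \calS \setminus \{0\}$ for $\alpha \neq \beta$, and since $\ker h_\alpha \cap \ker h_\beta \subseteq \ker(h_\alpha - h_\beta)$, we get the central inequality
$$\mrk(\calS) \leq \rk h_\alpha + \rk h_\beta \qquad (\alpha \neq \beta),$$
while reflexivity gives $\bigcup_{\alpha \in \K^n} \ker h_\alpha = U$. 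The problem thus reduces to exhibiting two distinct parameters with rank sum at most $\tfrac{n(n+1)}{2}$.

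To produce such parameters, I would run a triangular descent: iteratively pick vectors $x_1, x_2, \dots \in U$ and subtract elements of $\calS$ from $g$ so as to produce adjustments $h^{(i)} \in g + \calS$ vanishing on $\Vect(x_1, \dots, x_i)$. The existence of each next step rests on the reflexivity of the restricted operator space $\calS|_{\Vect(x_1,\dots,x_i)}$, which is guaranteed either by its being of small dimension (hence trivially reflexive) or, if it happens to be non-reflexive of dimension $m < n$, by the inductive hypothesis applied to it — which in the latter case already exhibits an element of $\calS$ of rank at most $\frac{m(m+1)}{2}$, completing the argument early. After at most $n$ stages the descent terminates, at which point a non-trivial ambiguity in the final lift yields two genuinely distinct parameters $\alpha \neq \beta$; a careful codimension accounting across the $\leq n$ stages makes the rank sum telescope to the triangular total $1 + 2 + \cdots + n = \tfrac{n(n+1)}{2}$.

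The main obstacle is the twin role played by the minimal non-reflexivity hypothesis. On the one hand it must ensure that each intermediate restricted operator space $\calS|_{\Vect(x_1,\dots,x_i)}$ is well-behaved — reflexive when needed, or of dimension strictly less than $n$ so that the inductive hypothesis can be invoked. On the other hand, it must force the existence of two genuinely distinct adjustments at the terminating stage of the descent, so that the central inequality produces a non-zero element of $\calS$ rather than a trivial one. Balancing these two constraints, while verifying that the codimensions accumulated along the descent really do fit inside the triangular bound (rather than overshoot to something like $n^2$), is the technical heart of the argument.
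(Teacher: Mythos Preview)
The paper does not prove this proposition: it is quoted, with attribution to Theorem~4.5 of \cite{dSPLLD1}, purely as background (``the best known result over such fields''), and then immediately superseded by Theorem~\ref{maintheo}. There is therefore no proof in the present paper to compare your proposal against.

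As to your plan on its own merits: the framework is sound but the substance is missing. The base case, the reduction to a minimally non-reflexive $\calS$, and the inequality $\mrk(\calS)\le \rk h_\alpha+\rk h_\beta$ for $\alpha\neq\beta$ are all correct and standard. The entire content of the bound, however, lies in your ``triangular descent'', and that part is only a wish list. Two concrete gaps: first, the assertion that ``the existence of each next step rests on the reflexivity of $\calS|_{\Vect(x_1,\dots,x_i)}$'' is not justified --- reflexivity of a restricted space tells you which operators on $U$ locally agree with $\calS$ on that subspace, but it does not obviously hand you an element of $\calS$ that kills $x_i$ while adding only $i$ to the rank. Second, you have not explained why the terminal stage must produce \emph{two} distinct parameters $\alpha\neq\beta$; without this the central inequality yields nothing. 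You yourself flag both points as ``the technical heart of the argument'' and leave them unresolved, so what you have is an outline of where a proof might live, not a proof.
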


Here, we shall improve this upper-bound as follows, thus generalizing Meshulam and \v Semrl's theorem to all fields:

\begin{theo}\label{maintheo}
Let $\calS$ be an $n$-dimensional non-reflexive operator space.
Then,
$$\mrk(\calS) \leq 2n-2.$$
\end{theo}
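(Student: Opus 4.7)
The result is already known in the following cases: Meshulam and \v Semrl's theorem yields $\mrk(\calS) \leq 2n-2$ whenever $\#\K > n+2$ (and hence for every infinite field), and Theorem \ref{finitecase} covers every finite field $\K$ with $\#\K > n$ and $n \geq 3$. Since one-dimensional subspaces are automatically reflexive, the remaining case is that of a finite field $\K = \F_q$ with either $n = 2$ and $q \leq 4$ (handled by direct inspection of the small operator spaces) or $n \geq 3$ and $q \leq n$, which is our principal concern.

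The strategy is to reduce this remaining case to Theorem \ref{finitecase} by a scalar extension argument. Choose $d$ so that $\L := \F_{q^d}$ has more than $n$ elements, and form $\calS_\L := \L \otimes_\K \calS$, an $n$-dimensional $\L$-subspace of $\calL_\L(U_\L,V_\L)$. The plan then has three steps: (i) prove that $\calS_\L$ is non-reflexive over $\L$; (ii) apply Theorem \ref{finitecase} over $\L$ to produce a non-zero $F \in \calS_\L$ with $\rk_\L F \leq 2n-3$; and (iii) descend $F$ to a non-zero element of $\calS$ of $\K$-rank at most $2n-2$.

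The main technical obstacle is step (i). Given $g \in \calR(\calS) \setminus \calS$, one would hope that the canonical $\L$-linear extension $g_\L : U_\L \to V_\L$ lies in $\calR(\calS_\L)$. Writing an arbitrary $y \in U_\L$ in a $\K$-basis $(e_j)$ of $\L$ as $y = \sum_j e_j \otimes y^{(j)}$, the verification $g_\L(y) \in \calS_\L(y)$ amounts to finding a single $F \in \calS_\L$ whose value at $y$ matches $\sum_j e_j \otimes g(y^{(j)})$: a condition strictly stronger than the pointwise containment $g(y^{(j)}) \in \calS(y^{(j)})$, since a common $\L$-coefficient vector must simultaneously cover all the $y^{(j)}$'s. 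The key is to exploit the hypothesis $g \in \calR(\calS)$ not only on each $y^{(j)}$ but on every $\K$-linear combination of the $y^{(j)}$'s to solve the corresponding $\L$-linear system. Once the inclusion $(\calR(\calS))_\L \subseteq \calR(\calS_\L)$ is established, the dimension count $\dim_\L (\calR(\calS))_\L = \dim_\K \calR(\calS) > n$ forces $\calS_\L$ to be non-reflexive.

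Step (iii) reduces to finding a $\K$-rational point on the affine variety $X := \{f \in \Vect_\K(f_j) : \rk_\K f \leq 2n-2\}$, where $F = \sum_j e_j \otimes f_j$. This variety is defined over $\K$ by the vanishing of $(2n-1) \times (2n-1)$ minors and contains the non-zero $\L$-point $F$, so a combination of Galois-orbit analysis under $\mathrm{Gal}(\L/\K)$ and a Chevalley--Warning-style count on its defining equations should yield the required $\K$-point; the slack provided by working with the bound $2n-2$ rather than the sharper $2n-3$ is crucial in order to make this descent go through.
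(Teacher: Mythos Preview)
Your strategy breaks at step (i): non-reflexivity is \emph{not} preserved under scalar extension, so the inclusion $(\calR(\calS))_\L \subseteq \calR(\calS_\L)$ you are aiming for can fail. Over $\K=\F_2$, take $U=\K^2$, $V=\K^3$, and
\[
f_1=\begin{pmatrix}1&0\\0&1\\0&0\end{pmatrix},\qquad
f_2=\begin{pmatrix}0&0\\1&0\\0&1\end{pmatrix},\qquad
g=\begin{pmatrix}1&0\\0&0\\0&1\end{pmatrix}.
\]
Checking the three non-zero vectors of $\F_2^2$ shows $g(x)\in\Vect\bigl(f_1(x),f_2(x)\bigr)$ for every $x$, while $g\notin\calS:=\Vect(f_1,f_2)$; hence $\calS$ is non-reflexive. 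But over $\L=\F_4=\F_2(\alpha)$ with $\alpha^2=\alpha+1$, at $y=(1,\alpha)$ one finds $g_\L(y)=(1,0,\alpha)\notin\Vect_\L\bigl((1,\alpha,0),(0,1,\alpha)\bigr)$; in fact a short computation gives $\calR(\calS_\L)=\calS_\L$, so $\calS_\L$ is \emph{reflexive} and Theorem~\ref{finitecase} is simply unavailable. Your heuristic ``exploit the hypothesis on every $\K$-linear combination of the $y^{(j)}$'' cannot repair this, because the obstruction at $y=(1,\alpha)$ is a genuinely $\L$-linear one that no amount of $\K$-rational information resolves. Step (iii) is likewise only a wish: an $\L$-point on a $\K$-defined determinantal cone need not yield a non-zero $\K$-point, and Chevalley--Warning says nothing of the sort for systems of $(2n-1)\times(2n-1)$ minors.

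The paper's argument is entirely different and avoids scalar extension altogether. Working directly over the given finite field (after reducing to $\dim U<\infty$ via Larson's theorem), it fixes $g\in\calR(\calS)\setminus\calS$, sets $\calT:=g+\calS$, and double-counts the incidence set $\calN=\{(x,h)\in U\times\calT:h(x)=0\}$. A first count forces $\calT$ to contain an operator of rank at most $n-1$; a second pins the minimal rank in $\calT$ at exactly $n-1$; a third, refined count involving the number of rank-$n$ operators in $\calT$ and the intersections of their kernels with that of the unique minimal-rank operator then yields a contradiction with $\mrk(\calS)>2n-2$.
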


To achieve this, we will prove that Theorem \ref{maintheo} holds for all finite fields.
In the case when the source space of the operators in $\calS$ is finite-dimensional,
we will use counting techniques together with very basic results from linear algebra to
obtain the above result (Section \ref{finitedimsection}). These methods were inspired by an article of Meshulam and \v Semrl \cite{MeshulamSemrlPJM}, in which
a similar technique was used to study locally linearly dependent spaces of operators over finite fields.
In the last section, the general result will be derived from this situation by using a theorem of Larson \cite{Larson}.

Before we proceed with the proof of Theorem \ref{maintheo}, we would like to make a few observations.
First of all, Theorem \ref{finitecase} shows that $2n-2$ is not an optimal upper-bound for finite fields of large cardinality and $n \geq 3$.
We do not know whether the upper bound $2n-3$ holds for arbitrary finite fields when $n \geq 3$.
In any case, it is known that the optimal upper-bound must be greater than or equal to $n$, owing to the existence of
$n$-dimensional division algebras over any finite field (see \cite{MeshulamSemrlPAMS}).
Our last remark is that results on non-reflexive spaces are often obtained as special cases of results on locally linearly dependent operator spaces.
Recall that the subspace $\calS \subset \calL(U,V)$ is called \textbf{locally linearly dependent} (in short: \textbf{LLD}) whenever
every vector $x \in U$ is annihilated by some operator $f \in \calS \setminus \{0\}$.
If $\calS$ is non-reflexive and one chooses $g \in \calR(\calS) \setminus \calS$, then
$\calS \oplus \K g$ is an LLD space of which $\calS$ is a linear hyperplane.
Most of the results we have cited are actually special cases of results on linear hyperplanes of LLD spaces
(provided that $\dim \calS \geq 2$; on the other hand, every $1$-dimensional operator space is reflexive).
However, it is not true that a linear hyperplane of an LLD space is always non-reflexive.
One special feature of the proof of Theorem \ref{finitecase} is that we shall use the full power of the non-reflexivity
assumption instead of relying only upon local linear dependence. We do not know whether the upper bound
$2n-2$ in Theorem \ref{maintheo} holds for linear hyperplanes of locally linearly dependent spaces as well (with $n \geq 2$).

\section{Proof of Theorem \ref{maintheo} in the finite-dimensional setting over a finite field}\label{finitedimsection}

Throughout this section, we assume that the field $\K$ is finite and $q$ denotes its cardinality.
Let $U$ and $V$ be vector spaces over $\K$, and $\calS$ be a finite-dimensional non-reflexive subspace of $\calL(U,V)$.
Assume that $U$ is finite-dimensional, set 
$$p:=\dim U \quad \text{and} \quad n:=\dim \calS,$$
and assume that
$$\mrk (\calS)>2n-2,$$
so that
$$p \geq 2n-1.$$
We seek to find a contradiction.
Classically, if $n \leq 1$ then $\calS$ would be reflexive and hence
$$n \geq 2.$$
Let us choose an operator $g \in \calR(\calS) \setminus \calS$.
Then, $$\calT:=g+\calS$$
is an $n$-dimensional affine subspace of $\calL(U,V)$ that does not contain $0$, and the assumption $g \in \calR(\calS)$
translates into:
$$\forall x \in U, \; \exists h \in \calT : \; h(x)=0.$$
Let us consider the set
$$\calN:=\bigl\{(x,h)\in U \times \calT : \; h(x)=0\bigr\}.$$

\begin{claim}\label{claim1}
The affine space $\calT$ contains an operator $h$ such that $\rk h \leq n-1$.
\end{claim}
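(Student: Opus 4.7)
My plan is to establish the claim by a double counting argument on the cardinality of $\calN$, exploiting the finiteness of $\K$; the technique is reminiscent of the one of Meshulam and \v Semrl cited in the introduction. The key inputs are $|\calT|=q^n$, $|U|=q^p$, and $|\Ker h|=q^{p-\rk h}$ for every $h\in\calT$. I would argue by contradiction, assuming that every operator in $\calT$ has rank at least $n$.

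Summing over $h\in\calT$ first, the working hypothesis gives $|\Ker h|\leq q^{p-n}$ for every $h$, hence the upper bound
$$|\calN|\,=\,\sum_{h\in\calT}|\Ker h|\,\leq\,q^n\cdot q^{p-n}\,=\,q^p.$$
Summing over $x\in U$ instead, I would single out the point $x=0$, which contributes the full summand $|\calT|=q^n$ because every $h\in\calT$ is linear; each of the remaining $q^p-1$ vectors $x$ contributes at least $1$ by virtue of $g\in\calR(\calS)$, which supplies some $h\in\calT$ with $h(x)=0$. This yields the lower bound
$$|\calN|\,=\,\sum_{x\in U}\bigl|\{h\in\calT:h(x)=0\}\bigr|\,\geq\,q^n+(q^p-1).$$

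Combining the two bounds gives $q^n\leq 1$, which contradicts $n\geq 2$. The only delicate point in this argument is to isolate the contribution of $x=0$ from that of the other vectors: this is precisely what supplies the extra $q^n$ slack and turns what would otherwise be a counting equality into a strict inequality. The isolation is legitimate because $g\notin\calS$ forces $0\notin\calT$, so that the pairs $(0,h)$ with $h \in \calT$ are genuinely additional rather than already counted among the non-zero $x$'s. Note that the hypotheses $\mrk(\calS)>2n-2$ and $p\geq 2n-1$ do not enter this particular step.
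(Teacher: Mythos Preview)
Your proof is correct and essentially identical to the paper's: both bound $\#\calN$ from below by $q^n + q^p - 1$ (isolating the contribution of $x=0$) and from above by $q^p$ (using the rank hypothesis), arriving at the same contradiction. Your closing remark that $0 \notin \calT$ is what legitimizes the isolation is a slight misattribution---the contribution $q^n$ at $x=0$ comes simply from every linear map sending $0$ to $0$, and $0 \notin \calT$ plays no role in this particular claim.
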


\begin{proof}
Assume on the contrary that no such operator exists.
For all $x \in U$, either $x=0$ and then all the operators $h \in \calT$ satisfy $h(x)=0$,
or $x \neq 0$ and then at least one operator $h \in \calT$ satisfies $h(x)=0$.
This leads to
$$q^n+q^p-1 \leq \# \calN.$$
On the other hand, for every $f \in \calT$, we have $\dim \Ker f \leq p-n$ and hence
at most $q^{p-n}$ vectors of $U$ are annihilated by $f$. This yields
$$\# \calN \leq q^n\,q^{p-n}=q^p.$$
Combining the above two inequalities leads to $q^n-1 \leq 0$, contradicting $n > 0$.
\end{proof}

Now, let us set
$$r:=\min \bigl\{\rk h \mid h \in \calT\bigr\}.$$
Note that we have just proved that
$$r \leq n-1.$$
In the rest of the proof, we shall use the following simple remark:
if there are distinct operators $h_1$ and $h_2$ in $\calT$ such that $\rk h_1=r$ and $\rk h_2\leq 2n-2-r$, then
$h_1-h_2$ is a non-zero operator of $\calS$ and
$$\rk(h_1-h_2) \leq r+(2n-2-r)=2n-2,$$
which contradicts our assumption that $\mrk(\calS)>2n-2$. Thus, we obtain:

\begin{claim}
The space $\calT$ contains exactly one rank $r$ operator, and all the other ones have their rank greater than $2n-2-r$.
\end{claim}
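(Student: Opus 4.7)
The plan is to simply apply the ``simple remark'' stated just before the claim, together with the bound $r \leq n-1$ established in Claim \ref{claim1}. Fix $h_1 \in \calT$ with $\rk h_1 = r$; such an $h_1$ exists by the very definition of $r$.

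For the second assertion (every other element of $\calT$ has rank greater than $2n-2-r$), suppose by contradiction that some $h_2 \in \calT$ with $h_2 \neq h_1$ satisfies $\rk h_2 \leq 2n-2-r$. Then the remark applies verbatim: $h_1 - h_2$ is a non-zero element of $\calS$ of rank at most $r + (2n-2-r) = 2n-2$, contradicting $\mrk(\calS) > 2n-2$.

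For the first assertion (uniqueness of the rank-$r$ operator in $\calT$), suppose $h_2 \in \calT \setminus \{h_1\}$ also satisfies $\rk h_2 = r$. Since $r \leq n-1$ by Claim \ref{claim1}, we have $2r \leq 2n-2$, i.e., $r \leq 2n-2-r$. Thus $\rk h_2 \leq 2n-2-r$, which is precisely the situation ruled out in the preceding paragraph. Hence $h_1$ is the unique rank-$r$ element of $\calT$.

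There is essentially no obstacle here: all the work has already been done by Claim \ref{claim1} (which guarantees $r \leq n-1$, making the inequality $r \leq 2n-2-r$ automatic) and by the displayed remark (which converts an operator of rank at most $2n-2-r$ in $\calT$ into a non-zero operator of rank at most $2n-2$ in $\calS$). The role of the claim is really bookkeeping: it packages the remark into a form that will be convenient for the subsequent counting argument, by isolating the unique minimal-rank operator $h_1$ and giving a strong lower bound on the ranks of the remaining elements of $\calT$.
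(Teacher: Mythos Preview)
Your proof is correct and follows exactly the paper's approach: the paper derives the claim directly from the ``simple remark'' stated immediately before it, and you have simply spelled out the two applications of that remark (the second assertion verbatim, and the uniqueness by noting that $r \leq n-1$ forces $r \leq 2n-2-r$). There is no difference in substance.
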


Next, we prove:

\begin{claim}
One has $r=n-1$.
\end{claim}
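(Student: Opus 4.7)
The plan is to argue by contradiction: suppose $r \leq n-2$ and set $s := n-1-r \geq 1$. Let $h_0$ denote the unique rank $r$ operator in $\calT$ (furnished by the previous claim) and put $K := \Ker h_0$, which has dimension $p - r$. Every other operator $h \in \calT \setminus \{h_0\}$ satisfies $\rk h \geq 2n-1-r$, so $\dim \Ker h \leq p - n - s$ and hence $\#\Ker h \leq q^{p-n-s}$.

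The heart of the argument is a double count of the pairs $(x,h) \in (U \setminus K) \times \calT$ satisfying $h(x) = 0$. The assumption $g \in \calR(\calS)$ guarantees that every $x \in U$ is annihilated by some $h \in \calT$; when $x \notin K$, this $h$ must differ from $h_0$, so at least one such pair exists for every $x \in U \setminus K$. This yields the lower bound $\#(U \setminus K) = q^p - q^{p-r}$ on the number of pairs. From the operator side, $h_0$ contributes no pair, and each of the remaining $q^n - 1$ operators contributes at most $q^{p-n-s}$ pairs, giving the upper bound $(q^n - 1)\,q^{p-n-s}$.

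Combining the two estimates and dividing by $q^{p-n-s}$ produces the inequality $q^n(q^s - 1) \leq q^{2s+1} - 1$. Now $r \geq 1$ (because $0 \notin \calT$) forces $n \geq s + 2$, so $q^n(q^s - 1) \geq q^{s+2}(q^s - 1) = q^{2s+2} - q^{s+2}$, and a short arithmetic check using $q \geq 2$, $s \geq 1$ and $2s+1 \geq s+2$ shows $q^{2s+2} - q^{s+2} > q^{2s+1} - 1$, producing the sought contradiction. The one genuine obstacle is identifying this precise double count: restricting $x$ to the complement of $K$ is what exploits the rank gap between $h_0$ and the rest of $\calT$ optimally, whereas summing naively over all of $U$ yields an inequality too weak to rule out $s = 1$ when $q$ is small.
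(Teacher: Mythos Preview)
Your proof is correct and follows essentially the same double-counting idea as the paper: bound the number of incidences $(x,h)$ with $h(x)=0$ from below using the covering property of $\calT$ and from above using the rank gap between $h_0$ and the remaining operators, then extract an arithmetic contradiction.

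One remark on your closing commentary, though: the paper does \emph{not} restrict to $U\setminus K$; it counts over all of $U$, obtaining
\[
q^n+q^p-1 \;\leq\; \#\calN \;\leq\; q^{p-r}+(q^n-1)\,q^{p-2n+1+r},
\]
which rearranges to $q^p-q^{p-r}\leq (q^n-1)\bigl(q^{p-2n+1+r}-1\bigr)$. In your notation this is $q^n(q^s-1)\leq q^{2s+1}-q^n$, which is \emph{stronger} than your inequality $q^n(q^s-1)\leq q^{2s+1}-1$ by a term $q^n-1$. So your assertion that ``summing naively over all of $U$ yields an inequality too weak to rule out $s=1$'' is mistaken: the full count works fine (and is in fact sharper), because the paper also singles out $h_0$ in the upper bound. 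Your restriction to $U\setminus K$ is a harmless cosmetic variant, not the crux of the argument.
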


\begin{proof}
Recall from the proof of Claim \ref{claim1} that $q^n+q^p-1 \leq \# \calN $.
On the other hand, the sole rank $r$ operator of $\calT$ annihilates exactly $q^{p-r}$ vectors of $U$,
whereas every other operator in $\calT$ annihilates at most $q^{p-2n+1+r}$ vectors.
This leads to
$$\# \calN \leq q^{p-r}+(q^n-1)q^{p-2n+1+r},$$
and hence
$$q^{p-r}(q^r-1) \leq (q^n-1)(q^{p-2n+1+r}-1).$$

In particular, as $r>0$ we find $p-2n+1+r>0$, and factoring yields
$$q^{r-n+1} \geq \frac{1-q^{-r}}{(1-q^{-n})(1-q^{2n-p-1-r})}\cdot$$
Obviously, as $q \geq 2$ and $r>0$, 
$$\frac{1-q^{-r}}{(1-q^{-n})(1-q^{2n-p-1-r})}> 1-q^{-r}\geq \frac{1}{2},$$
and hence $r-n+1 >-1$, which leads to $r \geq n-1$.
\end{proof}

Now, we know that $\calT$ contains one rank $n-1$ operator, which we denote by $h_0$, and all the other ones have greater rank.
Set
$$m:=\# \bigl\{f \in \calT : \; \rk(f) \leq n\bigr\},$$
so that $m \leq q^n$ and $\calT$ contains exactly $m-1$ rank $n$ operators, and
exactly $q^n-m$ operators with rank greater than $n$.
This leads to
\begin{equation}\label{majo3}
\# \calN \leq q^{p-n+1}+(m-1)q^{p-n}+(q^n-m)q^{p-n-1}.
\end{equation}
For every $h \in \calT$ such that $\rk h=n$, we have
$$\dim (\Ker h \cap \Ker h_0) \geq \dim \Ker h+\dim \Ker h_0-\dim U = p-2n+1.$$
Thus, at least $q^{p-2n+1}$ vectors of $\Ker h_0$ belong to $\Ker h$.
Considering the subset
$$\calN':=\calN \cap \bigl((\Ker h_0 \setminus \{0\}) \times (\calT \setminus \{h_0\})\bigr),$$
this leads to
$$\# \calN' \geq (q^{p-2n+1}-1)\,(m-1),$$
and hence
\begin{equation}\label{mino3}
q^n+q^p-1+(q^{p-2n+1}-1)\,(m-1) \leq \# \calN.
\end{equation}
Combining \eqref{majo3} with \eqref{mino3} leads to
\begin{equation}\label{minequality}
q^n+q^p-q^{p-2n+1}-q^{p-n+1}+q^{p-n}-q^{p-1}
\leq m\bigl(q^{p-n}-q^{p-n-1}-q^{p-2n+1}+1\bigr).
\end{equation}
As $q \geq 2$ we have on the other hand
$$q^{p-n}-q^{p-n-1}-q^{p-2n+1}+1 \geq q^{p-n-1}(q-1)-q^{p-2n+1}\geq q^{p-n-1}-q^{p-2n+1} \geq 0,$$
where the last inequality comes from $n \geq 2$.
As $m \leq q^n$ we deduce that
$$q^n+q^p-q^{p-2n+1}-q^{p-n+1}+q^{p-n}-q^{p-1}
\leq q^n\bigl(q^{p-n}-q^{p-n-1}-q^{p-2n+1}+1\bigr).$$
Expanding and simplifying leads to
$$q^{p-n} \leq q^{p-2n+1}.$$
Yet, $q^{p-2n+1}<q^{p-n}$ since $n \geq 2$.

This final contradiction shows that our initial assumption was wrong. This yields
$$\mrk(\calS) \leq 2n-2,$$
thereby completing the proof of Theorem \ref{maintheo} in the special case when $\K$ is finite and the source space of $\calS$ is finite-dimensional.

\section{The generalization to operator spaces between infinite-dimensional spaces}\label{infinitesection}

Now, we complete the proof of Theorem \ref{maintheo} for finite fields.
Assume that $\K$ is finite.

We lose no generality in assuming that $\calS$ is a minimal non-reflexive space.
Then, by a theorem of Larson \cite[Corollary 2.8]{Larson}, all the operators in $\calS$ have finite rank. It follows that
$$U_0:=\underset{f \in \calS}{\bigcap} \Ker f$$
has finite codimension in $U$. Then, every $f \in \calR(\calS)$ naturally induces
a linear operator
$$\overline{f} : U/U_0 \rightarrow V$$
with the same rank as $f$, to the effect that the \textbf{reduced space}
$$\overline{\calS}:=\bigl\{\overline{f} \mid f \in \calS\bigr\}$$
has dimension $n$ and the vector space $\overline{\calR(\calS)}$ is isomorphic to $\calR(\calS)$, whose dimension is greater than $n$.
One checks that $\overline{\calR(\calS)} \subset \calR(\overline{\calS})$ (actually, those spaces are equal),
and hence $\overline{\calS}$ is non-reflexive.
Then, as $U/U_0$ is finite-dimensional, we deduce from Section \ref{finitedimsection} that
$$\mrk (\calS)=\mrk (\overline{\calS}) \leq 2n-2,$$
which completes the proof.

\begin{Rem}
If $\calS$ contains an operator with infinite rank,
then applying the above result to the subspace
$\calS_F$ of all finite rank operators in $\calS$ - which, by a theorem of Larson \cite{Larson}, is non-reflexive - yields $\mrk \calS = \mrk \calS_F \leq 2n-4$.
Therefore, if $\mrk \calS \geq 2n-3$ then $\calS$ contains only finite rank operators.
\end{Rem}

\end{document}